\documentclass[12pt]{article}
\usepackage{latexsym,amssymb,upref,amsmath,amsthm, amsfonts,authblk}
\usepackage{color}
\usepackage{graphicx}
\usepackage[margin=1in]{geometry}
%

\usepackage{hyperref}

\usepackage{float}

\bibliographystyle{siam}

\newtheorem{thm}{Theorem}

\newtheorem{claim}[thm]{Claim}

\makeatletter

\makeatother

\author{J\'ozsef Balogh\thanks{Department of Mathematics, University of Illinois at Urbana-Champaign, IL, 
USA. Email: \texttt{jobal@illinois.edu}.
Partially supported by NSF Grant DMS-1764123 and Arnold O. Beckman Research Award (UIUC) Campus Research Board 18132, Simons Fellowship 
and the Langan Scholar Fund (UIUC).}
\qquad
John Finlay\thanks{Department of Mathematical Sciences, University of Montana. Email: \texttt{john.finlay@umontana.edu}.}
\qquad
Cory Palmer\thanks{Department of Mathematical Sciences, University of Montana. Email: \texttt{cory.palmer@umontana.edu}.
Research supported by a grant from the Simons Foundation \#712036.}}

\title{Rainbow connectivity of randomly perturbed graphs}

\begin{document}

\maketitle

\begin{abstract}
In this note we examine the following random graph model: for an arbitrary graph $H$, with quadratic many edges, construct a graph $G$ by randomly adding $m$ edges to $H$ and randomly coloring the edges of $G$ with $r$ colors. We show that for $m$ a large enough constant and $r \geq 5$, every pair of vertices in $G$ are joined by a rainbow path, i.e., $G$ is {\it rainbow connected}, with high probability. This confirms a conjecture of Anastos and Frieze [{\it J. Graph Theory} {\bf 92} (2019)] who proved the statement for $r \geq 7$ and resolved the case when $r \leq 4$ and $m$ is a function of $n$.
\end{abstract}

\section{Introduction}

The {\it randomly perturbed graph model} is the following: for a fixed positive constant $\delta>0$, let $\mathcal{G}(n, \delta)$ be the set of graphs on vertex set $[n]$ with minimum degree at least $\delta n $. A graph $H=(V, E) $ is chosen arbitrarily from $\mathcal{G}(n,\delta)$ and a set $R$ of $m$ edges are chosen uniformly at random from $\binom{[n]}{2} \setminus E$ (i.e., those edges not in $H$) and added to $H$. The resulting {\it perturbed graph} is 
\[
G_{H, m}=(V, E \cup R).
\]
This model was introduced by Bohman, Frieze, and Martin \cite{BFM}. As taking $\delta=0$ gives the standard random graph, this model can be viewed as a generalization of the classic Erd\H os-R\'enyi random graph model.
Given a class of graphs $\mathcal{G}$ and a monotone increasing property $P$, a natural line of questioning is: for a graph $H\in \mathcal{G}$ can we perturb the graph, specifically by adding edges randomly so that the resulting graph satisfies $P$ with high probability?
For example, in \cite{BFM} it is shown that for an $n$-vertex dense graph $H$, the perturbed graph $G_{H,m}$ is Hamiltonian with high probability (w.h.p.) if $m$ is at least linear in $n$ and that there are dense non-Hamiltonian graphs $H$ such that it is necessary for $m$ to be linear in $n$ for $G_{H,m}$ to be Hamiltonian w.h.p.

Various other properties of randomly perturbed graphs have been investigated including clique number, chromatic number, diameter and connectivity \cite{BFKM}, spanning trees \cite{BHKMPP, BMPP, JK, KKS, KRS}, Ramsey properties \cite{DT, DMT}, tilings \cite{BTW}, and containing some fixed subgraph \cite{MST}.

Anastos and Frieze \cite{AF} examined random edge-colorings of perturbed graphs. Let $G^r_{H, m}$ be the graph $G_{H,m}$ equipped with an $r$-edge-coloring where the colors of the edges are independently and uniformly selected from $[r]$ (in this way the coloring need not be proper). In \cite{AF} it was shown that a linear number of edges $m$ and number of colors $r \geq (120 - 20 \log \delta)n$ suffice for $G^r_{H,m}$ to have a rainbow-Hamilton cycle w.h.p. Later Aigner-Horev and Hefetz \cite{AH} improved the value of $r$ to the asymptotically best-possible $(1+o(1))n$. Very recently Katsamaktsis and Letzter \cite{KL} showed that the optimal number of colors $n$ suffices.

An edge-colored graph is {\it rainbow connected} if there is a rainbow path (i.e., each edge has a distinct color) between every pair of vertices. 
Rainbow connectivity has been studied in graphs in general as well as in the standard Erd\H os-R\'enyi random graph model. Typically, the question is to find the minimum $r$ such that a graph $G$ has an $r$-edge-coloring such that $G$ is rainbow connected. In this case we say that $G$ has {\it rainbow connection number} $r$. This parameter was introduced by Chartrand, Johns, McKeon and Zhang \cite{CJMZ}. For a survey of rainbow connectivity in graphs, see Li and Sun \cite{LS}.

In random graphs the goal is to determine a threshold on the edge probability $p$ such that w.h.p.\ $G(n,p)$ has rainbow connection number $r$. Caro, Lev, Roditty, Tuza and Yuster~\cite{CLRTY} showed that $p = \sqrt{\frac{\log n}{n}}$ is a sharp threshold for rainbow connection number $r=2$ and He and Liang~\cite{HL} proved that $\frac{(\log n)^{1/r}}{n^{1-1/r}}$ is a sharp threshold for $r\geq 3$. A further refinement for $r\geq 3$ was given by Heckel and Riordan~\cite{HR}.

In a perturbed graph $G_{H, m}^r$,
Anastos and Frieze \cite{AF} proved that if $r\geq 7 $ and $m=\omega (1)$, then with high probability $G_{H, m}^r$ is rainbow connected. 
Moreover, they showed the following.

\begin{thm}[Anastos, Frieze \cite{AF}]\label{AF-main}
\begin{enumerate}
    \item[(i)] For every $\delta>0$, if $m \geq 60 \delta^{-2} \log n$, then w.h.p.\ $G_{H,m}^3$ is rainbow connected.
    
    \item[(ii)] For every $0.1 \geq \delta>0$, there exists $H \in \mathcal{G}(n,\delta)$ such that if $m \leq 0.5 \log n$, then w.h.p.\ $G_{H,m}^4$ is not rainbow connected.
    
    \item[(iii)] If $r \geq 7$ and $m = \omega(1)$, then w.h.p.\ $G_{H,m}^r$ is rainbow connected.
\end{enumerate}
\end{thm}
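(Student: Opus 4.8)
For part (i), the basic structural fact is that whenever $m=O(\log n)$ — and, after reducing to this range via the monotonicity of rainbow connectivity in the edge set, throughout the only delicate range of (iii), namely $m=o(\sqrt n)$ — the random edge set $R$ is with high probability a matching, since the expected number of pairs of edges of $R$ sharing a vertex is $O(m^2/n)=o(1)$; no path may then use two random edges sharing a vertex. Fixing a pair $u,v$ and using that $N_H(u),N_H(v)$ each have size at least $\delta n$, I would argue as follows: unless these sets already span $\log n$ independent $H$-edges (in which case $H$ itself contains $\Omega(\log n)$ pairwise edge-disjoint $u$--$v$ paths of length at most $3$), there are $\Omega(n^2)$ non-edges between them, so a Chernoff bound for the hypergeometric distribution — the constant $60$ making it $o(n^{-2})$ — gives that at least $\Theta(\delta^2 m)=\Theta(\log n)$ edges $a_ib_i$ of $R$ satisfy $a_i\in N_H(u)$, $b_i\in N_H(v)$, and since $R$ is a matching the walks $u\,a_i\,b_i\,v$ are pairwise edge-disjoint paths. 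Either way we get $\Omega(\log n)$ edge-disjoint $u$--$v$ paths of length at most $3$; each is rainbow with probability $3!/3^3=2/9$, independently across the family, so all fail with probability $(7/9)^{\Theta(\log n)}=o(n^{-2})$, and a union bound over the $\binom n2$ pairs finishes it.

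For part (ii) one must construct a suitable dense $H$. The idea is to choose $H$ so that some designated pair $u,v$ sits ``deep'' — each of minimum degree $\delta n$, at large $H$-distance from the bulk of the graph, with a neighbourhood whose internal structure is rigid — so that, because a rainbow path with $4$ colours has at most $4$ edges and $R$ is a matching with high probability, every $u$--$v$ path of length at most $4$ in $G_{H,m}$ is forced into one of only a few rigid shapes, each using the random edges in a prescribed pattern. One then argues that $m\le 0.5\log n$ is too small for $R$ to realise more than a bounded number of these shapes, and that with high probability none of the resulting few candidate paths has all $\le 4$ of its edges distinctly coloured. Identifying the extremal $H$ and carrying out the enumeration of admissible short paths is the technical heart of this part.

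For part (iii), since $\delta$ is a fixed constant, $H$ has at most $1/\delta$ connected components, each (having minimum degree $\ge\delta n$ on $\le n$ vertices) of diameter $O(1/\delta)=O(1)$. In each component take a maximal set of vertices pairwise at $H$-distance $\ge 3$: their closed neighbourhoods are pairwise disjoint and of size $\ge\delta n$, so there are $\le 1/\delta$ of them, and by maximality every vertex of the component lies within $H$-distance $2$ of one. Let $T$ be the union of these sets over all components; then $|T|=O(1/\delta^2)=O(1)$ is bounded while $m=\omega(1)\to\infty$, which is the crux: for each of the $O(1)$ pairs $x,y\in T$, either an $H$-edge already joins $N_H^{\le 1}(x)$ and $N_H^{\le 1}(y)$, or all $\ge(\delta n)^2$ pairs between these disjoint sets are non-edges and hence with high probability one of them is a random edge. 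A union bound over the $O(1)$ hub pairs then shows that with high probability every pair of hubs is joined by an edge of $G_{H,m}$ between their closed neighbourhoods, so any $u,v$ are joined in $G_{H,m}$ by a path of length at most $7$ (reach $u$'s hub in $\le 2$ steps, cross to $v$'s hub via one neighbour-edge, the hub-linking edge, and one more neighbour-edge, then reach $v$ in $\le 2$ steps; a shared hub gives length $\le 4$).

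The remaining — and main — obstacle in (iii) is to promote ``a path of length $\le 7$'' to ``a rainbow path of length $\le 7$'' uniformly over all $\binom n2$ pairs. A naive union bound fails: when $m$ grows slowly only $O(1)$ random shortcuts are available for a given pair and the colour-clash events are merely constantly unlikely, which cannot beat $n^2$ pairs. The remedy is to exploit that every vertex has $\Omega(n)$ neighbours and that with high probability the colouring is balanced (every vertex sends $\Omega(n/r)$ edges of each colour): fanning out from $u$ and from $v$ into their neighbourhoods, and inserting further free vertices along the routes above, produces for each pair a family of $\Omega(n^2)$ candidate paths of length $\le 7$ whose ``new'' edges are pairwise disjoint, hence with essentially independent rainbow-events, so the per-pair failure probability drops to $e^{-\Omega(n)}$. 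With $r\ge 7$ colours there is enough slack to fix the colours of the $O(1)$ unavoidable ``core'' edges of a route and still complete a rainbow path — the point being to choose the route so that its core is itself rainbow, which is exactly what the gap between $7$ and the route length provides. Making this last step precise is where the real work of (iii) lies.
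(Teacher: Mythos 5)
You should first note that the paper does not prove Theorem~\ref{AF-main} at all: it is quoted from Anastos--Frieze \cite{AF}, so there is no in-paper proof to compare against, and your attempt has to stand on its own. Part (i): the strategy (monotone coupling down to $m=\Theta(\log n)$, $R$ w.h.p.\ a matching, a dichotomy between many independent $H$-edges and $\Omega(n^2)$ non-edges between $N_H(u)$ and $N_H(v)$, then hypergeometric concentration) is sound, but your dichotomy threshold is quantitatively wrong: with only $\log n$ edge-disjoint paths the failure probability is $(7/9)^{\log n}=n^{-\ln(9/7)}\approx n^{-0.25}$, which is nowhere near the $o(n^{-2})$ you need for the union bound; you must place the dichotomy at $C\log n$ with $C>2/\ln(9/7)\approx 8$ (the vertex-cover argument tolerates any $O(\log n)$ threshold, so this is fixable, but as written the first branch fails). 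Part (ii) is not a proof: no $H$ is constructed and nothing is verified. Note moreover that a pure distance/shape argument cannot suffice, since with $0.5\log n$ random edges it is likely that some random edge does join the region near $u$ to the region near $v$; one must genuinely use the randomness of the $4$-colouring to kill all short paths, and that analysis --- the entire content of (ii) --- is missing.

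Part (iii) contains the decisive gap, which you yourself flag. With a deterministic bounded hub set $T$, the guaranteed bridge between the closed neighbourhoods of two hubs may be a \emph{single} edge, and the connecting vertex from $u$ (resp.\ $v$) to its hub may be unique; a route $u\,p\,x\,a\,b\,y\,q\,v$ already uses $7$ edges, so within the length-$7$ budget there is no room to ``insert further free vertices,'' and the claimed family of $\Omega(n^2)$ candidate paths with pairwise disjoint new edges simply need not exist. Consequently the per-pair failure probability is only a constant (or at best $o(1)$, when the bridges are the $\omega(1)$ random edges), which cannot survive a union bound over $\binom{n}{2}$ pairs. The proof of Theorem~\ref{main-thm} in this paper shows the standard repair, and it is a genuinely different device from yours: instead of one fixed bounded hub structure, sample $t=\Theta(\log n)$ independent random $k$-sets $S_1,\dots,S_t$; w.h.p.\ each is ``good'' with probability $0.99$, every pair $u,v$ sees $\Omega(\log n)$ of them, each seen set yields a short rainbow path with constant probability essentially independently of the others, and the per-pair failure probability becomes $n^{-2-\varepsilon}$. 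Anastos--Frieze's argument for $r\ge 7$ relies on the same kind of logarithmic-size family of independent chances per pair; without such a mechanism your part (iii) is not a proof.
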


It is not immediately clear that rainbow connectivity is monotone in $r$ for constant $m$.
For $r=3$ colors, (i) shows that if $m$ has order of magnitude $\log n$, then $G^3_{H,m}$ is rainbow connected w.h.p., which shows that (ii) is best possible.
Anastos and Frieze \cite{AF} put forth that likely part (iii) holds for $5$ or $6$. In this note we prove the result for $r = 5$ and $r=6$. Note that part (ii) implies that this cannot be improved to $r \geq 4$.

\begin{thm}\label{main-thm}
For every $\delta >0$,
if $r \geq 5$ and  $m \geq {4}\delta^{-2} \log {2500}\delta^{-2}$, then w.h.p.\ $G_{H,m}^r$ is rainbow connected.
\end{thm}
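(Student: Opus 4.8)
The plan is to reveal the random edge set $R$ first, establish a deterministic ``abundance of short paths'' property of $G:=G_{H,m}$ that holds with high probability over $R$, and then union‑bound over the random $r$‑colouring. The property I aim for is: for every ordered pair $(u,v)$ of vertices there is an edge $ab\in E(G)$ (a \emph{bridge}, possibly an edge of $R$) together with two families $\{L_1,\dots,L_N\}$ and $\{R_1,\dots,R_N\}$, $N=\Omega(n)$, where each $L_i$ is a path of length at most $2$ from $u$ to $a$, each $R_j$ is a path of length at most $2$ from $b$ to $v$, the $L_i$ are pairwise edge‑disjoint (via distinct interior vertices), the $R_j$ are pairwise edge‑disjoint, and the bridge is edge‑disjoint from every $L_i$ and every $R_j$. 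Then all but $O(1)$ of the triples $(L_i,ab,R_j)$ concatenate to a simple $u$--$v$ path of length at most $5$.

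Granting this, the colouring step is short, and is exactly where the bound $r\ge 5$ is used. Fix $(u,v)$ together with its bridge and the two families, and condition on the colour $c_0$ of the single bridge edge $ab$. Since the $L_i$ are edge‑disjoint, the events ``$L_i$ is rainbow and uses no colour in $\{c_0\}$'' are mutually independent given $c_0$, each of probability at least $\min\!\big\{\tfrac{r-1}{r},\tfrac{(r-1)(r-2)}{r^2}\big\}=\tfrac{12}{25}$ when $r=5$; hence with probability $1-e^{-\Omega(n)}$ some $L_{i^\star}$ works, and it uses a colour set $S\supseteq\{c_0\}$ with $|S|\le 3$. Conditioning additionally on the (at most three) colours on $ab\cup L_{i^\star}$, the events ``$R_j$ is rainbow and uses no colour in $S$'' are again mutually independent, each of probability at least $\tfrac{(r-3)(r-4)}{r^2}=\tfrac{2}{25}$ when $r=5$; so with probability $1-e^{-\Omega(n)}$ some $R_{j^\star}$ completes a rainbow $u$--$v$ path (of length $\le 5$, using $\le 5$ colours). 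Thus the pair $(u,v)$ fails to be rainbow‑connected with probability $e^{-\Omega(n)}$, and a union bound over the $\binom n2$ pairs proves the theorem. Larger $r$ only enlarges the two displayed probabilities, so $r=5$ is the binding case; for $r=4$ the factor $\tfrac{(r-3)(r-4)}{r^2}$ is $0$ and the scheme correctly breaks down, in agreement with Theorem~\ref{AF-main}(ii).

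The heart of the matter is the structural property, which is the step I expect to be hardest. When $(u,v)$ lies in a well‑connected, ``expanding'' region of $H$ one can take the bridge inside $H$: exploring two neighbourhood rounds from each of $u$ and $v$ reaches sets of size $\Omega(n)$ through $\Omega(n)$ edge‑disjoint paths of length $\le 2$, and a short counting argument then locates an edge --- of $H$, or failing that of $R$ --- between these two linear sets. The delicate case is when $H$ has no short route between the relevant linear neighbourhoods of $u$ and $v$; then the bridge must be an edge of $R$, and one must establish two things at once: that $R$, although it has only the constant number $m\ge 20\delta^{-2}$ of edges, with high probability places a bridge wherever one is needed, and that each endpoint of that bridge is reachable from the appropriate one of $u,v$ by $\Omega(n)$ edge‑disjoint paths of length at most $2$ \emph{lying inside the dense piece of $H$ that contains it}. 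Making this rigorous means isolating the correct notion of a ``dense piece'' of $H$ --- one in which any two vertices are joined by $\Omega(n)$ edge‑disjoint paths of length $\le 2$ --- arguing from the minimum‑degree hypothesis that $V$ is covered by $O(\delta^{-1})$ such pieces, and then checking by a Chernoff‑and‑union‑bound computation that $\Theta(\delta^{-2})$ random edges suffice to interlink all $O(\delta^{-2})$ pairs of pieces while keeping every resulting $u$--$v$ route within the length‑$5$ budget. This bookkeeping --- in particular simultaneously respecting the length bound $5$ and retaining $\Omega(n)$ edge‑disjoint options on each side --- is the main obstacle; once it is in place the colouring argument above runs essentially automatically.
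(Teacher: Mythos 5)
Your colouring step is fine and is close in spirit to the paper's: both arguments aim at a rainbow $u$--$v$ path of length at most $5$ built from two steps out of $u$, one bridge edge, and two steps into $v$, and your factors $\tfrac{(r-1)(r-2)}{r^2}$ and $\tfrac{(r-3)(r-4)}{r^2}$ are a clean way to see why $r\ge 5$ is exactly what this scheme needs. The genuine gap is the structural lemma, which you concede is unproved, and which in the form you need it is actually false for constant $m$. You require that, w.h.p.\ over $R$ alone, \emph{every} pair $(u,v)$ admits a bridge $ab\in E(G_{H,m})$ with $\Omega(n)$ edge-disjoint length-$\le 2$ connections on each side. Take $H$ to be a disjoint union of about $\delta^{-1}$ cliques of size about $\delta n$. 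For $u,v$ in different cliques, any $u$--$v$ route whatsoever must use an edge of $R$ joining the two cliques, and the probability that none of the $m$ random edges does so is $(1-\Theta(\delta^2))^m=e^{-\Theta(\delta^2 m)}$, a quantity bounded away from $0$ when $m=\Theta(\delta^{-2})$. So the architecture ``reveal $R$, establish the structure w.h.p., then colour'' cannot reach the stated bound $m\ge 20\delta^{-2}$; at best it yields the $m=\omega(1)$ version. The paper is engineered around precisely this obstacle: it never asks for bridges to exist w.h.p., but draws $t=100\log n$ random $k$-sets $S_i$, shows each attempt (a bridge between $N(a)\setminus S$ and $N(b)\setminus S$ together with a rainbow colouring of the resulting $5$-path) succeeds with constant probability, and amplifies over the $\Theta(\log n)$ attempts to get per-pair failure $o(n^{-2})$, rather than demanding exponentially small failure from the colouring alone.

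Even apart from this, the sketched route to the structure has unresolved quantitative problems. That $V(H)$ can be covered by $O(\delta^{-1})$ ``pieces'' in which every two vertices share $\Omega(n)$ common neighbours does not follow readily from the minimum-degree hypothesis (averaging only gives, for each $u$, at least $\delta^2 n/2$ vertices having $\ge \delta^2 n/2$ common neighbours with $u$, and these sets need not assemble into a small cover with the pairwise property). And even granting pieces of size $\Theta(\delta^2 n)$, a single random edge lands between two prescribed pieces with probability only $\Theta(\delta^4)$, so with $m=20\delta^{-2}$ the chance that a given pair of pieces receives no bridge is $e^{-\Theta(\delta^2)}$, which for small $\delta$ is close to $1$; the advertised Chernoff-and-union-bound step cannot close at this value of $m$. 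As it stands, then, the proposal is conditional on a lemma that is both missing and, at the claimed $m$, not true; to salvage the plan you would either have to let $m\to\infty$, or restructure as the paper does, folding the existence of the bridge into each of $\Theta(\log n)$ attempts whose success probabilities are only constants.
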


We make no particular attempt to optimize the constant $m$ in Theorem~\ref{main-thm}, however proving sharp bounds remains an interesting problem. A natural modification of this model would be to introduce some deterministic part to the edge-coloring which may lead to different values of $m$ and $r$.

\section{Proof of Theorem~\ref{main-thm}}
Fix $r \geq 5$
and $0<\delta<1$ and integers $m \geq 4\delta^{-2} \log 2500\delta^{-2}$ and $k$ such that $6 \leq \delta k \leq 7$. Let $H$ be an arbitrary $n$-vertex graph with minimum degree at least $\delta n$. Randomly add $m$ edges from $\binom{[n]}{2}$ to $H$ (ignoring duplicate edges) and randomly $5$-color the edges of the resulting graph $G_{H,m}^r$. Note that this is a slightly weaker assumption than adding $m$ edges from $\binom{[n]}{2} \setminus E(H)$ to $H$, which will simplify arguments later.
At this point, we have two sources of randomness: the $m$ edges added to $H$ and the edge-coloring of $G_{H,m}^r$.

We begin by building $t = 100 \log n$ vertex sets of size $k$ in $H$ such that every pair of vertices $u,v$ are in the neighborhoods of a large proportion of these sets.

\begin{claim}\label{index-claim}
There exists vertex sets $S_1,S_2,\dots, S_t$, each of $k$ vertices in $V(H)$ such that for every pair of vertices $u,v \in V(H)$  there is an index set $I_{u,v} \subset [t]$ such that $|I_{u,v}| \geq 0.6t$ and for all $i \in I_{u,v}$ we have $u,v \in N(S_i)$.
\end{claim}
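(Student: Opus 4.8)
The plan is to prove the claim as a statement that holds with high probability over the random choice of $S_1,\dots,S_t$, by first fixing a pair $u,v$, showing the desired index set exists with probability at least $1-n^{-5}$, and then taking a union bound over the fewer than $n^2$ pairs. Throughout I will work with neighborhoods in $H$ rather than in $G_{H,m}^5$; since $H\subseteq G_{H,m}^5$ this only strengthens the conclusion, and it has the convenient feature that all the events in question depend on $S_1,\dots,S_t$ alone and not on the added edges $R$ or on the coloring.

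First, fix a pair $u,v\in V(H)$ and estimate $P(u,v\in N(S_i))$ for a single index $i$. Note that $u\in N(S_i)$ exactly when $S_i\cap N(u)\neq\emptyset$, and similarly for $v$. Since $H$ has minimum degree at least $\delta n$, the sets $N(u)$ and $N(v)$ each have size at least $\delta n$, and $S_i$ consists of $k$ vertices chosen independently and uniformly with replacement from $V(H)$, so $S_i$ avoids $N(u)$ entirely with probability at most $(1-\delta)^k$, and likewise for $N(v)$. A union bound gives
\[
P\big(u\notin N(S_i)\ \text{or}\ v\notin N(S_i)\big)\le 2(1-\delta)^k.
\]
Because $k=k(\delta)$ is a constant that we are free to take large depending only on $\delta$, we may assume $2(1-\delta)^k\le 0.1$, so that $P(u,v\in N(S_i))\ge 0.9$ for every $i\in[t]$.

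Next I pass from a single index to all of $[t]$. The sets $S_1,\dots,S_t$ are chosen independently, so the indicators of the events $\{u,v\in N(S_i)\}$ are mutually independent Bernoulli variables, each with parameter at least $0.9$; hence $X_{u,v}:=\abs{\{i\in[t]:u,v\in N(S_i)\}}$ stochastically dominates a binomial variable with mean at least $0.9t$. A standard Chernoff bound (with deviation parameter $\tfrac13$) then yields
\[
P\big(X_{u,v}<0.6t\big)\le \exp(-t/20)=n^{-5},
\]
using $t=100\log n$. Taking a union bound over the at most $\binom{n}{2}<n^2$ pairs $u,v$, we conclude that with probability at least $1-n^{-3}=1-o(1)$ every pair $u,v$ has $X_{u,v}\ge 0.6t$; for each such pair we simply set $I_{u,v}=\{i\in[t]:u,v\in N(S_i)\}$, which has size at least $0.6t$ and satisfies $u,v\in N(S_i)$ for all $i\in I_{u,v}$.

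I do not expect a genuine obstacle: this is a Chernoff-plus-union-bound argument, and the constants ($0.9$, $0.6$, the exponent $1/20$) have ample slack. The only point that needs care is the order of the quantifiers — $k$ must be fixed as a function of $\delta$ alone, so that the per-index success probability $0.9$ is an absolute constant once $\delta$ is fixed, and only then is the logarithmic choice $t=100\log n$ large enough for the per-pair failure probability to beat the factor $n^2$ coming from the union bound over pairs.
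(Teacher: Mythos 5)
Your proposal is correct and follows essentially the same route as the paper: bound the per-index failure probability by $2(1-\delta)^k$ using the minimum degree, exploit the independence of the $S_i$, apply a Chernoff bound to get a per-pair failure probability polynomially small in $n$ (your $n^{-5}$ versus the paper's $n^{-7}$, with slightly different constants $0.9$ vs.\ $0.99$ and $\alpha=1/3$ vs.\ $\alpha=0.39$), and finish with a union bound over the fewer than $n^2$ pairs. If anything, your write-up is a touch more explicit than the paper's, which concludes only that the failure probability is less than $1$, whereas you record the stronger (and later useful) fact that the index sets exist with probability $1-o(1)$.
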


\begin{proof}
Select uniformly at random with replacement $t$ sets $S_1,S_2,\dots, S_t$ each of $k$ vertices from $V(H)$.

For distinct vertices $u,v$ the probability $\mathbb{P}(u \not \in N(S_i) \textrm{  or } v \not \in N(S_i))$ is at most
\begin{align*}
  & \mathbb{P}(u \not \in N(S_i)) + \mathbb{P}(v \not \in N(S_i)) 
 \leq  2 \mathbb{P}(u \not \in N(S_i)) 
  = 2\mathbb{P}(S_i \cap N(u) = \emptyset) \\
  & \leq  2\frac{\binom{(1-\delta)n}{k}}{\binom{n}{k}} 
 \leq 2(1-\delta)^k < 2 \exp(-\delta k) < 0.01,
\end{align*}
as $k \geq 6\delta^{-1}$.
Thus, $\mathbb{P}(u,v\in N(S_i)) \geq 0.99$.

Let $X_i$ be the indicator random variable for the event that $u,v \in N(S_i)$. Now $X = X_1+ X_2 + \dots + X_t$ 
is the number of sets $S_i$ such that $u,v \in N(S_i)$.
Therefore, $\mathbb{E}[X] \geq 0.99t$.

As each set $S_i$ is selected uniformly at random with replacement, the random variables $X_i$ are independent. 
Therefore, a Chernoff bound (see \cite[pg. 66]{textbook}) gives
\begin{align*}
 \mathbb{P}(X \leq (1-\alpha) \mathbb{E}[X]) \leq \exp\left(-\frac{1}{2} \alpha^2 \mathbb{E}[X]\right) 
\end{align*}
for $0 < \alpha < 1$.
With $\alpha =  0.39$ this gives 
\[
\mathbb{P}(X \leq 0.6 t) \leq \mathbb{P}(X \leq 0.61 \cdot 0.99 t) \leq  \exp\left(-\frac{1}{2} (0.39)^2 \cdot 0.99 \cdot 100 \log n\right) < n^{-7} <  n^{-2}.
\]
                                    
Therefore, by the union bound, the probability that some $u,v$ is not contained in $0.6t$ sets $S_i$ is less than $1$. Thus, with positive probability the desired vertex sets exists which completes the proof.
\end{proof}

Put $S = \bigcup S_i$.
A set $S_i$ is {\it good} if for every pair $a,b \in S_i$ there exists an edge of $G_{H,m}^r$ between $N(a) \setminus  S$ and $N(b) \setminus S$.  Note that these two sets are not necessarily disjoint, so such an edge may be contained in their intersection.

\begin{claim}\label{good-claim}
$\mathbb{P}(S_i \textrm{ is good}) > 0.99$ for $n$ large enough.
\end{claim}

\begin{proof}
Here we only use the randomness of the $m$ edges added to $H$.
Fix $a,b \in S_i$.
As $|S| = |\cup S_i| \leq kt = 100 k \log n$,  we can choose $n$ large enough such that
 $N(a)\setminus S$ and $N(b)\setminus S$ both have size at least $\frac{1}{2} \delta n$.
Note that $N(a)\setminus S$ and $N(b) \setminus S$ are not necessarily disjoint,
so the probability that there is no edge between $N(a)\setminus S$ and $N(b)\setminus S$ is at most
\[
\left(\frac{\binom{n}{2} - \binom{\frac{1}{2} \delta n}{2}}{\binom{n}{2}}\right)^m \leq \left(1 - \frac{1}{4}\delta^2 \right)^m < \exp\left(-\frac{1}{4} \delta^2m\right).
\]
The set $S_i$ has $k$ vertices, so by the union bound, the probability that $S_i$ is not good is less than
\[
 \binom{k}{2} \exp\left(-\frac{1}{4} \delta^2m\right) \leq \frac{1}{2} \left( \frac{7}{\delta} \right)^2 \exp\left(-\frac{1}{4} \delta^2m\right)<0.01,
\]
as $m \geq 4\delta^{-2} \log 2500\delta^{-2}$.
\end{proof}

Fix arbitrary vertices $u$ and $v$ in $G_{H,m}^r$. We estimate the probability that there is a rainbow $u$--$v$ path (i.e.\ a path with end-vertices $u$ and $v$) of length at most $5$. Here we only use the randomness of the edge-coloring of $G_{H,m}^r$. If $uv$ is an edge, then we immediately have a rainbow $u$--$v$ path, so assume that $uv$ is not an edge.

Let $I_{u,v}$ be the index set guaranteed by Claim~\ref{index-claim}. For each $i \in I_{u,v}$, let us estimate the probability that there is a rainbow $u$--$v$ path of length at most $5$ using vertices in $S_i$. Let $a \in S_i$ be a neighbor of $u$ and let $b \in S_i$ be a neighbor of $v$. If $a=b$, then we have a $u$--$v$ path of length $2$ which is rainbow with probability
$(r-1)/r \geq 4/5$, so assume that $a \neq b$.

\begin{figure}[H]
\begin{center}
{\includegraphics{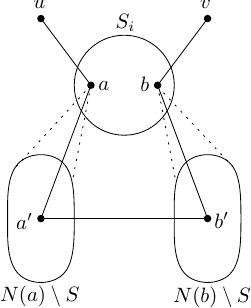}}
\end{center}
\caption{A $u$--$v$ path of length $5$.}
\end{figure}

By Claim~\ref{good-claim}, with probability at least $0.99$, there is an edge $a'b'$ between $N(a)\setminus S$ and $N(b)\setminus S$. 
Conditioning on the existence of $a'b'$, the probability that the path $uaa'b'bv$ is rainbow is
$\frac{(r-1)(r-2)(r-3)(r-4)}{r^5} \geq \frac{4!}{5^4}$.
Thus, the 
probability that there is a rainbow $u$--$v$ path using vertices $u,v,a,b$ is at least
$0.99 \cdot \frac{4!}{5^4}$. (Here we needed $r\geq 5$ colors as with fewer colors this path cannot be rainbow.)

Therefore, the probability that there is no rainbow $u$--$v$ path of length at most $5$ is at most
\begin{align*}
\left(1-0.99 \cdot\frac{4!}{5^4}\right)^{|I_{uv}|} & \leq \left(1-0.99 \cdot\frac{4!}{5^4}\right)^{0.6t} \leq  \exp\left(-0.99 \cdot \frac{4!}{5^4} \cdot 0.6 t\right) \\
&= \exp\left(-0.99 \cdot \frac{4!}{5^4} \cdot 0.6 \cdot 100 \log n\right) < n^{-2.25}
 = o(n^{-2}).
\end{align*}

Now, the union bound implies that the probability that there is a pair $u,v$ not connected by a rainbow path of length at most $5$ is $o(1)$, i.e., w.h.p.\ $G_{H,m}^r$ is rainbow connected. \hfill \qedsymbol

\section*{Acknowledgements}
We thank the anonymous referees for their careful reading of the manuscript and for many helpful comments.

\end{document}